\theoremstyle{plain}
\newtheorem{theorem}{Theorem}[section]
\newtheorem{corollary}[theorem]{Corollary}
\theoremstyle{definition}
\newtheorem{definition}[theorem]{Definition}
\newtheorem{example}[theorem]{Example}
\newtheorem{remark}[theorem]{Remark}
\newcommand{\C}{\mathbb{C}}
\newcommand{\R}{\mathbb{R}}
\newcommand{\N}{\mathbb{N}}
\newcommand{\littletaller}{\mathchoice{\vphantom{\big|}}{}{}{}}
\newcommand\restr[2]{{% we make the whole thing an ordinary symbol
		\left.\kern-\nulldelimiterspace % automatically resize the bar with \right
		#1 % the function
		\littletaller % pretend it's a little taller at normal size
		\right|_{#2} % this is the delimiter
}}
\title{Lifts of Operator Systems}
\author{Markus Dannem\"uller}
\author{Tim Netzer}
\affil{\small Department of Mathematics, University of Innsbruck, Austria}
\date{\today}
\begin{document}
\maketitle
\begin{abstract}
We transfer the theory  of slack operators and sums-of-squares-criteria for lifts from   convex cones to operator systems. These allow to study the following question, among others: Given an abstract operator system, is its enveloping $C^*$-algebra  finite-dimensional, or is it the linear image of  a system with a finite-dimensional enveloping algebra?  
\end{abstract}

\section{Introduction}

The study of convex sets and their representations through more tractable objects, such as spectrahedra or free spectrahedra, lies at the heart of modern convex optimization and real algebraic geometry. A central question is whether a given convex set can be represented as a linear image of an affine section of a cone of positive semidefinite matrices. Such representations, known as semidefinite representations or spectrahedral lifts, are highly desirable due to the availability of efficient algorithms for semidefinite programming.

In the classical (commutative) setting, foundational work from \cite{GPT, yann} established a deep connection between the existence of such lifts and the factorization of slack operators. This line of research was further enriched by sum-of-squares characterizations, which link the existence of a lift to the ability to express all positive linear functionals as sums of squares with respect to a finite-dimensional function space \cite{faw, fgpst, schei}.

This paper extends this framework to the noncommutative setting of operator systems. Operator systems provide a natural generalization of convex cones to the matrix-valued context, capturing the structure of positivity across all matrix levels. They play a fundamental role in operator theory and quantum information.

We introduce and study noncommutative lifts and factorizations for operator systems, mirroring and generalizing the classical notions. Our main results show that an operator system admits a  lift to another operator system $T$ if and only if its (noncommutative) slack operator admits a $T$-factorization. This extends the main theorem from \cite{GPT} to the operator system setting. We further  develop a sum-of-squares characterization for lifts of minimal operator systems -- those whose positivity structure is entirely determined by a base cone. This allows us to derive obstructions to the existence of free spectrahedral lifts using tools from real algebraic geometry, including the construction of positive matrix polynomials that are not sums of hermitian squares.

Our work not only unifies and extends existing lift-and-factorization theories but also opens new pathways for studying finite-dimensionality and representability in operator systems. The results have implications for the structure of enveloping $C^*$-algebras and the representation of convex sets in quantum and noncommutative optimization.

\section{Preliminaries}

\subsection{Lifts and Factorizations for Convex Cones} \label{sec:lift_fact}

In this section, we collect  important definitions and results on  slack operators,  sum-of-squares certificates for positive functionals, and  lifts for convex cones.  In \Cref{sec:results} we will transfer most of these concepts and results to the non-commutative setup of operator systems.

All vector spaces in this paper are real and equipped with the finest locally convex topology, all topological notions refer to it.
A convex cone \(C \subseteq V\) in a vector space \(V\) is called \textit{proper} if it is \textit{pointed} (i.e.\  $C \cap -C = \lbrace 0 \rbrace$), closed and has non-empty interior.
Given a convex cone \(C\), we call a subset a \textit{generating set} for \(C\), usually denoted \(\chi(C)\), if \(C = \operatorname{cc}(\chi(C))\), where \(\operatorname{cc}(\cdot)\) denotes the conic hull.

For a given convex cone \(C \subseteq V\), one can ask whether it is representable in terms of a different convex  cone. This is of interest in particular when the other cone is in some sense more well-behaved than \(C\). More precisely, given a convex cone $K\subseteq W$, we say that \(C\) {\it admits a $K$-realization}, if $C\cong H\cap K$ for some subspace $H\subseteq W$. The realization is {\it proper}, if $H$ intersects the interior of $K$. $C$ is said to have a \textit{\(K\)-lift}, if it is the linear image of a cone that has a $K$-realization.   The lift is called {\it proper} if the corresponding realization is proper.

A particularly interesting family of such lifts are those where  \(K = \operatorname{Psd}_d(\mathbb C)\) is a cone of positive semidefinite hermitian matrices. In this case, a cone with a $K$-realization is also called  a \textit{spectrahedron}, and a cone with a $K$-lift a \textit{spectrahedrop}.

It was shown in \cite{GPT} that the existence of \(K\)-lifts has different interesting characterisations. We note at this point that many of the following definitions and results are often formulated for bounded convex sets, but they can be  adapted easily to the  conic case.
First, one defines the notion of a slack operator and of a \(K\)-factorization:

\begin{definition}
	Let \(C\) be a proper convex cone with dual cone \(C^\lor\subseteq V'\), and let \(\chi(C)\) and \(\chi(C^\lor)\) be generating sets for \(C\) and \(C^\lor\), respectively. 
	The map \[\langle\cdot,\cdot\rangle_C\colon C \times C^\vee\to \R_{\geqslant 0};\ (c,f)\mapsto f(c)\] is called the \textit{slack operator} of \(C\).
	A pair of (possibly non-linear) maps \[\alpha\colon C \to K, \quad \beta\colon C^\lor \to K^\lor\] is called \(K\)\textit{-factorization} for the slack operator of $C$, if for all \(c \in \chi(C)\) and for all \(f \in \chi(C^\lor)\) it holds that $$\langle c,f\rangle_C = \langle \alpha(c),\beta(f)\rangle_K.$$
\end{definition}

One of the main results in \cite{GPT} is the following theorem, which shows that the existence of \(K\)-lifts and \(K\)-factorizations is essentially equivalent, with one direction requiring a mild additional assumption:

\begin{theorem} \label{thm:lift_iff_fact}
	Let \(C\) and \(K\) be finite-dimensional proper convex cones. If \(C\) has a proper \(K\)-lift, then its slack operator has a \(K\)-factorization. Conversely, if the slack operator of $C$ has a \(K\)-factorization, then \(C\) has a \(K\)-lift.
\end{theorem}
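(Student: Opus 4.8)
The plan is to treat the two implications separately, in both cases reducing to a single conic-duality fact: for a subspace $H$ and a proper cone $K$ in a finite-dimensional space $W$, one has $(H\cap K)^\lor = H^\perp + K^\lor$, with the sum already closed as soon as $H$ meets the interior of $K$. This is the classical refinement of the general identity $(K_1\cap K_2)^\lor=\overline{K_1^\lor+K_2^\lor}$, and it is exactly where the properness (Slater-type) hypothesis enters. I expect it to be the main obstacle, since the closure cannot in general be dropped and the factorization would otherwise only hold approximately.

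For the forward direction, assume $C=\pi(L)$ where $L\cong H\cap K$ is a proper $K$-realization; absorbing the cone isomorphism into $\pi$, I may take $L=H\cap K\subseteq W$ with $H\cap\mathrm{int}(K)\neq\emptyset$ and $\pi\colon W\to V$ linear. I would define $\alpha$ by choosing, for each generator $c\in\chi(C)$, a preimage $\alpha(c)\in L\subseteq K$ with $\pi(\alpha(c))=c$ (possible since $C=\pi(L)$; no linearity or continuity is required, matching the definition). For $f\in\chi(C^\lor)$, the functional $\pi^\ast f\in W'$ is nonnegative on $L$, i.e. $\pi^\ast f\in (H\cap K)^\lor$; the duality fact then writes $\pi^\ast f=\eta+g$ with $\eta\in H^\perp$ and $g\in K^\lor$, and I set $\beta(f)=g$. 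Since each $\alpha(c)\in H$ annihilates $\eta$, I obtain $\langle\alpha(c),\beta(f)\rangle_K=g(\alpha(c))=(\pi^\ast f)(\alpha(c))=f(c)=\langle c,f\rangle_C$, the required factorization.

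For the converse, I am given $\alpha\colon C\to K$ and $\beta\colon C^\lor\to K^\lor$ with $f(c)=\langle\alpha(c),\beta(f)\rangle_K$ on the generating sets, and I construct the lift directly inside $W$. First I check that the assignment $\alpha(c)\mapsto c$ extends to a well-defined linear map $\pi$ on $H:=\lin{\alpha(\chi(C))}$: if $\sum_j\mu_j\alpha(c_j)=0$, then pairing against each $\beta(f)$ gives $f(\sum_j\mu_j c_j)=0$ for all $f\in\chi(C^\lor)$, and since $C$ is pointed its dual $C^\lor$ spans $V'$, forcing $\sum_j\mu_j c_j=0$; I then extend $\pi$ arbitrarily to $W$. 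Dually, $\beta(f)\mapsto f$ is well defined because $C$ has nonempty interior and hence spans $V$, and this shows that the adjoint satisfies $\pi^\ast f|_H=\beta(f)|_H$.

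Finally I verify $\pi(H\cap K)=C$. The inclusion $\supseteq$ is immediate, since each $\alpha(c)\in H\cap K$ with $\pi(\alpha(c))=c$, so the conic hull of the $\alpha(c)$ already maps onto $C$. For $\subseteq$, take $w\in H\cap K$ and any $f\in\chi(C^\lor)$; then $f(\pi w)=(\pi^\ast f)(w)=\beta(f)(w)\geq 0$, using $\pi^\ast f|_H=\beta(f)|_H$, $w\in H$, and $\beta(f)\in K^\lor$, $w\in K$. As this holds for all generators of $C^\lor$ and $C=C^{\lor\lor}$ by closedness, we get $\pi(w)\in C$. Hence $H\cap K$ is a cone with an evident $K$-realization whose linear image under $\pi$ is exactly $C$, yielding the desired $K$-lift. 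Note that the converse uses only elementary spanning and double-dual arguments, so the genuinely delicate point remains the closedness of $H^\perp+K^\lor$ in the forward direction.
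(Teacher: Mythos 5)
Your proof is correct, and in substance it follows the same route as the paper; the one caveat in comparing is that the paper never proves this theorem itself (it cites \cite{GPT}), but it proves the noncommutative analogue, \Cref{thm:NC_lift_iff_fact}, by exactly the scheme you use, so that proof is the natural comparator. In the forward direction you and the paper do the same thing: \(\alpha\) picks preimages of generators, and \(\beta(f)\) is obtained by extending the pulled-back functional \(f\circ\pi\), which is nonnegative on the section \(H\cap K\), to a functional nonnegative on all of \(K\). Your conic-duality identity \((H\cap K)^\lor = H^\perp + K^\lor\), with closedness of the sum under the Slater condition \(H\cap\mathrm{int}(K)\neq\emptyset\), is precisely the commutative stand-in for Arveson's Extension Theorem in the paper's proof, and for the Farkas-type step the paper attributes to \cite{GPT}; you correctly identify this as the only place properness enters. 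In the converse your construction is packaged differently: the paper works in the direct sum, taking the subspace \(Z=\lbrace (x,y)\in X\oplus Y : f(x)=\beta(f)(y)\ \forall f\rbrace\) cut out by the compatibility equations and using the two coordinate projections, whereas you take the \emph{minimal} subspace \(H=\operatorname{lin}(\alpha(\chi(C)))\subseteq W\) and map it to \(V\) by \(\alpha(c)\mapsto c\). These give different (nested) lifted cones, but the verifications coincide step for step: pointedness of \(C\) gives your well-definedness of \(\pi\) exactly as it gives the paper's injectivity of \(\pi_Y\); the inclusion \(\pi(H\cap K)\subseteq C\) is biduality against generators of \(C^\lor\); the reverse inclusion comes from the graph points \(\alpha(c)\), mirroring the paper's elements \(\tilde a=(a,\alpha(a))\). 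One small slip that does not affect correctness: your remark that ``\(\beta(f)\mapsto f\) is well defined'' is neither needed nor the reason for \(\pi^\ast f|_H=\beta(f)|_H\); that identity (for \(f\in\chi(C^\lor)\), which is all you use) follows directly from the factorization condition and linearity of \(\beta(f)\). Your in-\(W\) construction is arguably the more economical one in the commutative setting; the paper's direct-sum version is the formulation that transfers levelwise to operator systems, where the lift must consist of genuine linear maps on a common space at every matrix level.
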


Since we will be mostly interested in spectrahedral lifts, the following sum-of-squares formulation of the existence of a \(\operatorname{Psd}_d(\mathbb C)\)-lift  is also of interest. The sufficient condition for a lift goes back to \cite{las}, the necessity was first shown in \cite{schei}, and slightly adapted in \cite{faw} (see also \cite{fgpst, nepl} for more details). 

\begin{theorem} \label{thm:lift_iff_SOS}
	Let \(C\) be a finite-dimensional proper convex cone and let \(W \subseteq \mathcal{F}(C, \R)\) be a finite-dimensional subspace of functions such that for any \(f \in C^\lor\), we find a sum-of-squares representation with elements \(h_k \in W\), i.e. \[f(c) = \sum_k h_k(c)^2\] for all \(c \in \chi(C)\). Then \(C\) admits a \(\operatorname{Psd}_d(\mathbb C)\)-lift for \(d = \operatorname{dim}(W)\). Conversely, if \(C\) admits a \(\operatorname{Psd}_d(\mathbb C)\)-lift, then there exists a subspace \(W\) of functions with the above property and with \(\operatorname{dim}(W) \leqslant d^2\).
\end{theorem}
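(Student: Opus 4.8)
The plan is to reduce both implications to the factorization criterion of \Cref{thm:lift_iff_fact}, using that $K=\operatorname{Psd}_d(\C)$ is self-dual under the trace pairing $\langle A,B\rangle_K=\tr{AB}$, so that $K^\lor\cong K$ and a map $\beta$ with values in $\operatorname{Psd}_d(\C)$ already lands in $K^\lor$. The only content beyond \Cref{thm:lift_iff_fact} is the dictionary between a sum-of-squares representation over a finite-dimensional function space and a factorization through positive semidefinite matrices; this is the Gram-matrix/square-root correspondence, which I would spell out in each direction.

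For the forward implication, fix a basis $e_1,\dots,e_d$ of $W$ and assemble the coordinate map $v\colon C\to\R^d$, $v(c)=(e_1(c),\dots,e_d(c))^\top$. Given $f\in C^\lor$ with $f(c)=\sum_k h_k(c)^2$ and $h_k=\sum_i a_{ki}e_i$, setting $a_k=(a_{ki})_i\in\R^d$ rewrites the representation as the quadratic form $f(c)=\sum_k\bigl(a_k^\top v(c)\bigr)^2=v(c)^\top A_f\,v(c)$ with $A_f=\sum_k a_k a_k^\top\in\operatorname{Psd}_d(\R)\subseteq\operatorname{Psd}_d(\C)$. Defining $\alpha(c)=v(c)v(c)^\top$ and $\beta(f)=A_f$ then gives $\langle\alpha(c),\beta(f)\rangle_K=\tr{v(c)v(c)^\top A_f}=v(c)^\top A_f v(c)=f(c)=\langle c,f\rangle_C$ on the generating sets, i.e.\ a $\operatorname{Psd}_d(\C)$-factorization of the slack operator with $d=\dim W$. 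An appeal to \Cref{thm:lift_iff_fact} then yields the claimed $\operatorname{Psd}_d(\C)$-lift.

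For the converse, a $\operatorname{Psd}_d(\C)$-lift yields, via \Cref{thm:lift_iff_fact} (after reducing to a proper lift), a factorization $\alpha\colon C\to\operatorname{Psd}_d(\C)$, $\beta\colon C^\lor\to\operatorname{Psd}_d(\C)$ with $f(c)=\tr{\alpha(c)\beta(f)}$ on the generating sets. The key step is to pass to the positive semidefinite square root $\alpha(c)^{1/2}$ and to let $W$ be the real span of its entry-functions, that is, of the diagonal entries and the real and imaginary parts of the strictly-upper entries; since $\alpha(c)^{1/2}$ is hermitian this gives $\dim W\le d^2$. Fixing $f$ and decomposing $\beta(f)=\sum_k w_k w_k^*$ with $w_k\in\C^d$, we obtain $f(c)=\sum_k w_k^*\alpha(c)w_k=\sum_k\norm{\alpha(c)^{1/2}w_k}^2$. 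Every component of $\alpha(c)^{1/2}w_k$ is a fixed $\C$-linear combination of the entries of $\alpha(c)^{1/2}$, so splitting real and imaginary parts via $\abs{z}^2=(\operatorname{Re}z)^2+(\operatorname{Im}z)^2$ exhibits $f$ as a sum of squares of functions in $W$, as required, with $\dim W\le d^2$.

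I expect the converse to be the main obstacle, for two reasons. First, one must argue that the square-root construction is legitimate: the real and imaginary parts of the entries of $\alpha(\cdot)^{1/2}$ span a fixed space of dimension at most $d^2$ that is \emph{independent of $f$} (only the coefficient vectors $w_k$ vary with $f$), and these entry-functions must be genuinely real-valued functions on $C$. Second, the factorization in \Cref{thm:lift_iff_fact} is extracted from a \emph{proper} lift, so for the converse I would first reduce an arbitrary $\operatorname{Psd}_d(\C)$-lift to a proper one of the same matrix size (or build the factorization directly from the lift data), keeping the bound $\dim W\le d^2$ intact. The forward direction, by contrast, is essentially a bookkeeping identity for Gram matrices followed by a direct appeal to \Cref{thm:lift_iff_fact}.
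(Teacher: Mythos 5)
The paper never proves this statement itself --- it is quoted in the preliminaries as a known result from \cite{las,schei,faw} --- so the natural comparison is with the paper's own noncommutative analogue, \Cref{thm:NC_lift_iff_SOHS}, whose proof follows exactly your strategy: for the SOS-to-lift direction, the rank-one Gram construction (there $\alpha(c)=f(c)^*f(c)$, your $v(c)v(c)^\top$) fed into the factorization theorem, and for the lift-to-SOS direction, the positive semidefinite square root of the lifted/factorized data, whose at most $d^2$ real entry functions span $W$ independently of $\phi$, with the dual element decomposed into rank-one terms (there via Choi--Kraus, yours via $\beta(f)=\sum_k w_kw_k^*$). Your proposal is correct, and the two caveats you flag --- reducing to a proper lift before invoking \Cref{thm:lift_iff_fact}, and checking that $W$ does not depend on $f$ --- are genuine but routine points that the cited proofs and the paper's matrix-valued version resolve in the same way.
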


\subsection{Operator Systems}
In this section we collect some basic facts about operator systems, see \cite{pau} for more detailed information and references.
An \textit{abstract operator system} $S$ on the vector space $X$ consists of a proper convex cone $$S_s\subseteq X\otimes {\rm Her}_s(\mathbb C)$$ for each $s\geqslant 1,$ such that for all $V\in{\rm Mat}_{s,t}(\mathbb C)$ the linear map $${\rm id}_X\otimes (V^*\cdot V) \colon X\otimes {\rm Her}_s(\mathbb C)\to X\otimes{\rm Her}_t(\mathbb C)$$ is positive with respect to these cones, i.e.\ maps $S_s$ into $S_t$.  Here we denote by ${\rm Her}_s(\mathbb C)$ the vector space of complex Hermitian matrices.

For each Hilbert space $H$ we denote by $\mathbb B_{\rm sa}(H)$ the space of self-adjoint bounded operators, and we use the identification $$\mathbb B_{\rm sa}(H)\otimes{\rm Her}_s(\mathbb C)=\mathbb B_{\rm sa}(H^s)$$ to define an important  operator system $P_H$ on the space $\mathbb B_{\rm sa}(H)$, defined as $$(P_H)_s\coloneqq \mathbb B_{\rm psd}(H^s),$$ the cone of positive operators. This is also known as the usual operator system structure on the $C^*$-algebra $\mathbb B(H)$. In case $H=\mathbb C^d$ we also write $P^d$ instead of $P_{\mathbb C^d}$ for this system on the space ${\rm Her}_d(\mathbb C)$.

If $S,T$ are abstract operator systems on spaces $X$ and $Y$, respectively,  a linear map $\phi\colon X\to Y$ is \emph{completely positive}, if all maps $$\phi\otimes{\rm id}_{{\rm Her}_s(\mathbb C)}\colon X\otimes{\rm Her}_s(\mathbb C)\to Y\otimes{\rm Her}_s(\mathbb C)$$ are positive, i.e.\ map $S_s$ into $T_s.$ We denote by ${\rm CP}(S,T)$ the convex cone of all completely positive maps from $S$ to $T,$ which lives in the space ${\rm Lin}(X,Y)$ of all linear maps from $X$ to $Y$.

The \textit{dual} of an abstract operator system \(S\) on $X$ is the operator system \(S^\lor\) on $X'$ defined by 
$$(S^\vee)_s\coloneqq (S_s)^\vee.$$  Note that this might fail to fulfill the nonempty interior condition for the cones, but by slight abuse of notation  we will still call it an operator system. Using the canonical identification 
 $X'\otimes{\rm Her}_s(\mathbb C)={\rm Lin}(X,{\rm Her}_s(\mathbb C))$ it can be checked that $$(S^\vee)_s={\rm CP}(S,P^{s})$$ holds. 

\section{Lifts and Factorizations for Operator Systems}
\label{sec:results}

For convenience, we introduce some more notation. Let \(X\) and \(Y\) be vector spaces, \(\phi\colon X \to Y\) a linear map, and \(S, T\) operator systems on \(X\) and \(Y\), respectively. Then for \(a \in X\otimes{\rm Her}_s(\C)\) we define $$\phi[a] \coloneqq (\phi \otimes {\rm id}_{{\rm Her}_s(\mathbb C)})(a).$$ Similarly, for \(b \in Y\otimes{\rm Her}_s(\C)\) we define \(\phi^{-1}[b]\subseteq X\otimes{\rm Her}_s(\C)\) by \[a \in \phi^{-1}[b] :\Leftrightarrow \phi[a] = b.\]
With this, we are ready to define non-commutative analogues for the lifts and factorizations from \Cref{sec:lift_fact}.

\begin{definition}
	Let \(X, Y, Z\) be vector spaces, $\psi\colon X\to Y, \pi\colon Z \to X$ and $\gamma\colon Z \to Y$ linear maps, and \(S\) and \(T\) operator systems on \(X\) and \(Y\), respectively.
    Then $\psi$ is a {\it $T$-realization} of $S,$ if $S=\psi^{-1}[T],$ where the inverse image is understood at each level $s$. We call the realization {\it proper}, if $\psi$ hits the interior of $T_1$ in $Y$.
    We say that the pair \((\pi, \gamma)\) is a {\it $T$-lift} of \(S\) if $\gamma$ is injective and \[S = \pi[\gamma^{-1}[T]],\] again levelwise.
    The lift is \textit{proper}, if $\gamma$ hits the interior of $T_1.$
\end{definition}

\begin{remark} \label{rem:lift}
    If $\gamma$ is proper, then \(\gamma^{-1}[T]\) is indeed an operator system. Just like in the commutative case in \Cref{thm:lift_iff_fact}, this notion of properness will be required for one direction of an analogous free statement.
\end{remark}

\begin{remark}
    The Choi-Effros Theorem states that every operator system has a proper $P_H$-realization, for some Hilbert space $H$. The fact that $H$ can be chosen finite-dimensional is equivalent to the enveloping $C^*$-algebra of the operator system being finite-dimensional, which is clear from the universal property. In view of the above, such systems are also called {\it free spectrahedra,} and the systems with a lift to such a system are called {\it free spectrahedrops.}
\end{remark}

Next, we define the notion of a free slack operator. For this, the evaluation map from the commutative case is replaced with a more general bilinear evaluation map.

\begin{definition}
    For an operator system \(S\) with dual \(S^\lor\), we define its \textit{slack operator} as the family of maps \[[\cdot,\cdot ]_{S,s,t}\colon S_s \times S^\lor_t \to \operatorname{Psd}_{st}(\mathbb C);\  (a, \phi) \mapsto \phi[a]\] where we use the identification
    $S^\vee_t={\rm CP}(S,P^t).$  We will typically suppress the indices \(s, t\) and write $[\cdot,\cdot]_S$ for any of the maps.
\end{definition}

Next, we define an analogue of factorizations for slack operators:

\begin{definition}
	Given operator systems \(S\) and \(T\), a {\it $T$-factorization} of the slack operator $[\cdot,\cdot]_S$ of $S$ consists of maps \[\alpha_s\colon S_s \to T_s, \qquad \beta_t\colon S_t^\lor \to T_t^\lor,\] not necessarily linear,  such that for all \(s, t \in \N\), \(a \in S_s\) and \(\phi \in S^\lor_t\) we have \[ [a, \phi]_S = [\alpha_s(a),\beta_t(\phi)]_T .\] 
\end{definition}

With these definitions, we are now ready to formulate our first main result, a natural adaptation of Theorem \ref{thm:lift_iff_fact} to the setting of operator systems. The proof is similar in spirit to the one from \cite{GPT}, but we have to use Arveson's Extension Theorem instead of Farkas' Lemma in a crucial step.

\begin{theorem} \label{thm:NC_lift_iff_fact}
	Let \(S\) and \(T\) be operator systems on \(X\) and \(Y\), respectively. If \(S\) admits a proper \(T\)-lift, then its slack operator  admits a \(T\)-factorization. Conversely, if the slack operator of $S$ admits a \(T\)-factorization, then \(S\) admits a  \(T\)-lift.
\end{theorem}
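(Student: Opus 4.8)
The plan is to prove the two implications separately, following the strategy of \cite{GPT} but replacing the separation/Farkas step by Arveson's Extension Theorem in the direction that produces a factorization.

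\emph{Proper lift $\Rightarrow$ factorization.} Suppose $(\pi,\gamma)$ is a proper $T$-lift and set $C \coloneqq \gamma^{-1}[T]$, which by \Cref{rem:lift} is an operator system on $Z$; since $\gamma$ is injective and proper it is a complete order embedding identifying $C$ with an operator subsystem $\gamma(Z)$ of $T$, and by hypothesis $\pi[C_s] = S_s$ for every $s$. I would define $\alpha_s$ by choosing, for each $a \in S_s$, some $z \in C_s$ with $\pi[z] = a$ and setting $\alpha_s(a) \coloneqq \gamma[z] \in T_s$; this uses surjectivity of $\pi$ at each level and is exactly why $\alpha_s$ need not be linear. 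To define $\beta_t(\phi)$ for $\phi \in S_t^\lor = {\rm CP}(S,P^t)$, note that $\phi \circ \pi \colon C \to P^t$ is completely positive as a composite of completely positive maps, hence under $C \cong \gamma(Z)$ it is a completely positive map from an operator subsystem of $T$ into $P^t$. The crucial step is to extend it to all of $T$: since $P^t = \mathbb B_{\rm sa}(\C^t)$ and ${\rm Mat}_t(\C)$ is injective, Arveson's Extension Theorem yields a completely positive $\psi \colon T \to P^t$, i.e.\ $\psi \in T_t^\lor$, with $\psi \circ \gamma = \phi \circ \pi$ on $Z$, and I set $\beta_t(\phi) \coloneqq \psi$. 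This Arveson step, which has no purely convex analogue, is where I expect the real work to lie: it requires first embedding $T$ into some $\mathbb B_{\rm sa}(H)$ via Choi--Effros and checking that $\gamma(Z)$ is a genuine operator subsystem, which is precisely where properness of the lift enters. The factorization identity is then the direct computation $[\alpha_s(a),\beta_t(\phi)]_T = \psi[\gamma[z]] = (\psi\circ\gamma)[z] = (\phi\circ\pi)[z] = \phi[\pi[z]] = \phi[a] = [a,\phi]_S$.

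\emph{Factorization $\Rightarrow$ lift.} Here I would build the lift with $\gamma$ an inclusion, so that no properness is needed, matching the statement. It suffices to exhibit a subspace $Z \subseteq Y$ and a linear map $\pi \colon Z \to X$ with $\pi[\,T_s \cap (Z\otimes{\rm Her}_s(\C))\,] = S_s$ for all $s$, since then $(\pi,\iota)$ with $\iota \colon Z \hookrightarrow Y$ the injective inclusion is a $T$-lift and $\gamma^{-1}[T] = T_s \cap (Z\otimes{\rm Her}_s(\C))$. I take $Z$ to be the span in $Y$ of all slices $(\mathrm{id}_Y\otimes\lambda)(\alpha_s(a))$ of the elements $\alpha_s(a)$, $a \in S_s$, $s \geq 1$, so that automatically $\alpha_s(a) \in T_s \cap (Z\otimes{\rm Her}_s(\C))$. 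The map $\pi$ should satisfy $\pi[\alpha_s(a)] = a$, which I impose on slices by declaring $\pi\big((\mathrm{id}_Y\otimes\lambda)(\alpha_s(a))\big) \coloneqq (\mathrm{id}_X\otimes\lambda)(a)$ and extending linearly.

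The key step is well-definedness of $\pi$, and this is where the factorization does the same job as the cone factorization in \cite{GPT}: if a finite combination $\sum_i (\mathrm{id}_Y\otimes\lambda_i)(\alpha_{s_i}(a_i))$ vanishes, then pairing $\sum_i (\mathrm{id}_X\otimes\lambda_i)(a_i)$ against any $\phi \in S_1^\lor$ and applying the $t=1$ factorization $\phi[a_i] = \beta_1(\phi)[\alpha_{s_i}(a_i)]$ rewrites the pairing as $\langle \sum_i (\mathrm{id}_Y\otimes\lambda_i)(\alpha_{s_i}(a_i)),\,\beta_1(\phi)\rangle = 0$; since $S_1$ is proper, $S_1^\lor$ spans $X'$, whence $\sum_i (\mathrm{id}_X\otimes\lambda_i)(a_i) = 0$. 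With $\pi$ in hand, a spanning-set argument gives $\phi\circ\pi = \beta_t(\phi)|_Z$, i.e.\ $\pi^*[\phi] = \beta_t(\phi)|_Z$, for every $\phi \in S_t^\lor$. Finally I verify $\pi[\,T_s\cap(Z\otimes{\rm Her}_s(\C))\,] = S_s$ levelwise: the inclusion $\supseteq$ is immediate from $\pi[\alpha_s(a)] = a$ with $\alpha_s(a)$ in the domain, and for $\subseteq$, given $w \in T_s\cap(Z\otimes{\rm Her}_s(\C))$ and $\psi \in S_s^\lor$, the adjoint identity together with $\pi^*[\psi] = \beta_s(\psi)|_Z$ yields $\langle \pi[w],\psi\rangle = \langle w,\beta_s(\psi)\rangle \geq 0$ because $w \in T_s$ and $\beta_s(\psi)\in T_s^\lor$, so $\pi[w] \in (S_s^\lor)^\lor = S_s$. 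I expect the only delicate points to be the bookkeeping of the slice description of $Z$ and the matching of scalar dual pairings across levels; the conceptual content is entirely in the well-definedness of $\pi$, while the genuinely new ingredient over the commutative case remains the Arveson extension in the first implication.
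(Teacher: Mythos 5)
Your proposal is correct, and your first direction (proper lift $\Rightarrow$ factorization) is essentially the paper's own argument: choose preimages under $\pi$ inside $\gamma^{-1}[T]$ to define $\alpha_s$, observe that $\phi\circ\pi$ is completely positive on $\gamma^{-1}[T]$, identify $\gamma^{-1}[T]$ with the subsystem $\gamma(Z)$ of $T$ via injectivity of $\gamma$ (properness supplying the order unit that makes Arveson's Extension Theorem applicable), and set $\beta_t(\phi)$ equal to the extension. Where you genuinely deviate is the converse. The paper builds the lift space as the subspace of $X\oplus Y$ cut out by the linear equations $\phi(x)=\beta_t(\phi)(y)$ for all $t$ and all $\phi\in S_t^\lor$, and uses the two coordinate projections; you instead build the smallest space that can work, namely the span $Z\subseteq Y$ of the slices of the elements $\alpha_s(a)$, with $\pi$ defined on these generators and $\gamma$ literally an inclusion. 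The constructions are dual to each other (your $Z$ embeds into the paper's via $z\mapsto(\pi(z),z)$), and the proof obligations get redistributed: in the paper, the compatibility $\phi\circ\pi_X=\beta_t(\phi)\circ\pi_Y$ holds by definition of $Z$, and the work is to show that $(a,\alpha_s(a))$ actually lies over $Z$ (a basis expansion in ${\rm Her}_s(\C)$ using the factorization identity), plus injectivity of $\pi_Y$ via pointedness; in your version, membership of $\alpha_s(a)$ in $Z\otimes{\rm Her}_s(\C)$ and the identity $\pi[\alpha_s(a)]=a$ are free by construction, and the work migrates into well-definedness of $\pi$ (factorization at $t=1$ plus pointedness) and into the spanning-set verification of $\phi\circ\pi=\beta_t(\phi)|_Z$ (factorization at all levels $(s,t)$). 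Both routes rest on exactly the same three ingredients -- the factorization identity expanded along slices, pointedness of $S_1$, and classical biduality for the inclusion $\pi[\,T\cap(Z\otimes{\rm Her}_s(\C))\,]\subseteq S$ -- so neither is weaker; yours trades the paper's quotient-free construction for a cleaner realization of $\gamma$ as a genuine inclusion.

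One wording correction in your well-definedness step: you should not claim that $S_1^\lor$ spans $X'$; in the paper's setting the spaces may be infinite-dimensional (finest locally convex topology), and spanning is neither obvious nor needed. What you need, and what is true, is that $S_1^\lor$ separates points of $X$: if $\phi(v)=0$ for all $\phi\in S_1^\lor$, then $v\in(S_1^\lor)^\lor\cap-(S_1^\lor)^\lor=S_1\cap-S_1=\{0\}$ by the bipolar theorem and pointedness, which is precisely the biduality argument the paper uses to prove injectivity of its map $\pi_Y$.
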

\begin{proof}
	Let us assume first that \([\cdot,\cdot]_S\) admits a \(T\)-factorization, given by the family of maps \((\alpha, \beta)\). We define the vector space \[Z \coloneqq \lbrace (x, y) \in X \oplus Y \mid \forall t, \phi \in S^\lor_t\colon \phi(x) = \beta_t(\phi)(y) \rbrace,\] which comes equipped with the canonical projections \[\pi_X\colon Z \to X, \qquad \pi_Y\colon Z \to Y.\] Our goal is to show that \((\pi_X, \pi_Y)\) is a \(T\)-lift for \(S\). Hence, we need to show that $\pi_Y$ is injective and \(\pi_X[\pi_Y^{-1}[T]] = S\) holds. 

   For injectivity of $\pi_Y$, assume \((x, y) \in \operatorname{ker}(\pi_Y)\), so clearly \(y = 0\). But then \((x, 0)\) is such that \(\forall t, \phi \in S^\lor_t\) we have \(\phi(x) = \beta_t(\phi)(0) = 0\), since \(\beta_t(\phi)\) is a linear map. Already from the case $t=1$ we get  $x\in S_1\cap -S_1=\{0\}$, proving injectivity. 
	
    Now let \(c \in \pi_Y^{-1}[T]_s\), \(\phi \in S^\lor_s\), and consider \(\phi[\pi_X[c]]\in{\rm Her}_{s^2}(\C).\) By construction, the maps \(\phi \circ \pi_X\) and \(\beta_s(\phi) \circ \pi_Y\) coincide on \(Z\). But then they  also agree when applying them to elements of \(\pi_Y^{-1}[T]_s\), so we get \[\phi[\pi_X[c]] = \beta_s(\phi)[\pi_Y[c]].\] Since \(\pi_Y[c] \in T_s\) and \(\beta_s(\phi) \in T^\lor_s\), the right-hand side is positive semi-definite, and this in particular implies  $\phi(\pi_X[c])\geqslant 0.$ Since this holds for every \(\phi \in S^\lor_s\), we get $\pi_X(c)\in S_s$, by classical biduality. This shows that \(\pi_X[\pi_Y^{-1}[T]] \subseteq S\).
	
	For the reverse inclusion, let \(a \in S_s\). Since \(\alpha_s(a) \in T_s\), we clearly have $\pi_Y^{-1}[\alpha_s(a)] \subseteq \pi_Y^{-1}[T]_s.$ We now define $$\tilde{a} \coloneqq (a, \alpha_s(a))\in (X\otimes{\rm Her}_s(\C))\oplus(Y\otimes{\rm Her}_s(\C))=(X\oplus Y)\otimes{\rm Her}_s(\C).$$ Once we have shown that $\tilde a\in Z\otimes{\rm Her}_s(\C)$ holds, we are done, since  \(\tilde{a} \in \pi_Y^{-1}[\alpha_s(a)]\subseteq \pi_Y^{-1}[T]_s,\) and $\pi_X[\tilde a]=a$ holds. To prove that $\tilde a$ lives over $Z$, let us fix a basis \((M_i)_i\) of \(\operatorname{Her}_s(\C)\) and write \[a = \sum_i x_i \otimes M_i, \qquad \alpha_s(a) = \sum_i y_i \otimes M_i.\]  From the definition of a factorization we obtain \begin{align*}\sum_i\beta_t(\phi)(y_i)\otimes M_i&=\beta_t(\phi)[\alpha_s(a)]=\phi[a]=\sum_i\phi(x_i)\otimes M_i.\end{align*} Since the $M_i$ form a basis, this implies $\beta_t(\phi)(y_i)=\phi(x_i)$ for all $i$, i.e.\ $(x_i,y_i)\in Z.$ Now clearly $\tilde a=\sum_i (x_i,y_i)\otimes M_i\in Z\otimes{\rm Her}_s(\C)$, as desired.
	
	Taking both inclusions together, we conclude that \(S = \pi_X[\pi_Y^{-1}[T]]\), hence \((\pi_X, \pi_Y)\) forms a \(T\)-lift for \(S\).
	
	For the converse statement, let us assume that \(S\) admits a proper \(T\)-lift, given by  linear maps \(\pi\colon Z \to X\) and \(\gamma\colon Z \to Y\) for some vectorspace \(Z\), such that $\gamma$ hits the interior of $T_1$. 
	
	We note that for any \(a \in S_s\), the fibre \(\pi^{-1}[a]\) has non-empty intersection with \(\gamma^{-1}[T]\). So we can just pick an arbitrary  element \(\tilde{a} \in \pi^{-1}[a] \cap \gamma^{-1}[T]\) and define \(\alpha_s(a) \coloneqq \gamma[\tilde{a}]\in T_s.\)
	
	Next we need to construct \(\beta_t(\phi)\in T^\vee_t\) for any given \(\phi \in S^\lor_t\). Understanding again \(\phi\) as an element of \({\rm CP}(S, P^t)\), we pull this map back along \(\pi\) and define \[\tilde{\phi} \coloneqq \phi \circ \pi.\] For any \(c \in \gamma^{-1}[T]\) we have \[\tilde{\phi}[c] = \phi[\pi[c]],\] and since \(\pi[c] \in S\) and \(\phi \in S^\lor\), this expression is positive semidefinite. This shows that \[\tilde{\phi} \in {\rm CP}(\gamma^{-1}[T], P^t).\]
	We now  extend \(\tilde{\phi}\) to a completely positive map on \(T\), which we then take as \(\beta_t(\phi)\). This  makes sense since \(\gamma\) is injective, and we can thus view it as the inclusion of \(\gamma^{-1}[T]\) into \(T\), and  Arveson's Extension Theorem guarantees the existence of an extension (which uses properness of $\gamma$).   We now have  \[[\alpha_s(a),\beta_t(\phi)]_T=\beta_t(\phi)[\alpha_s(a)] = \tilde{\phi}[\tilde{a}] = \phi[\pi[\tilde{a}]] = \phi[a]=[a,\phi]_S,\] which shows that \((\alpha, \beta)\) is indeed a \(T\)-factorization of the slack operator of \(S\). 
\end{proof}

When looking carefully at the construction of the \(T\)-lift from a given factorization \((\alpha,\beta)\), we notice that we do not really need to have \(\alpha\) and \(\beta\) defined on all of \(S\) and \(S^\lor\), respectively. Instead, it is sufficient to define them on generating sets \(\chi(S)\) and \(\chi(S^\lor)\) which generate \(S\) and \(S^\lor\) as operator systems, i.e. using matrix-conic combinations. However, since neither \(\alpha\) nor \(\beta\) are assumed to be linear, we can simply extend them to all of \(S\) and \(S^\lor\) by fixing for each element a (non-unique) matrix conic combination of the generators and extending \(\alpha\) and \(\beta\) linearly on this combination. So in this sense, it is irrelevant whether we define a factorization on all of \(S\) and \(S^\lor\) or only on generating sets.

If \(C\) is a proper cone and \(S = C^\textnormal{min}\) is the smallest operator system with base cone \(S_1=C\), then it is generated by any generating set \(\chi(C)\) of \(C\). Hence, the factorization map \(\alpha\) only needs to be defined on \(\chi(C),\) and not on levels \(s \geqslant 2\). This means that the sequence of maps \(\alpha\) actually simplifies to a single map \(\alpha_1\colon \chi(C) \to T_1\).

\begin{corollary}
\label{cor:minlift}
	Let \(C\subseteq X\) be a proper cone, and \(T\) an operator system on the space $Y$. Assume there exists a (not necessarily linear) map \(\alpha\colon \chi(C) \to T_1,\) such that for every \(t\geqslant 1\) and every positive linear map \(\phi \in \operatorname{Pos}(C, \operatorname{Psd}_t(\C))\) there exists a map \(\psi \in {\rm CP}(T, P^t)\) with \[\phi(c) = \psi(\alpha(c))\] for all \(c \in \chi(C)\). Then \(C^\textnormal{min}\) admits a \(T\)-lift.
\end{corollary}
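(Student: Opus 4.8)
The plan is to build a $T$-factorization of the slack operator of $C^{\min}$ and then apply the converse (factorization $\Rightarrow$ lift) direction of \Cref{thm:NC_lift_iff_fact}. The decisive structural observation is the one recorded after \Cref{thm:NC_lift_iff_fact}: a factorization of the slack operator of $C^{\min}$ need only be prescribed on a generating set, and $\chi(C)$ generates $C^{\min}$ as an operator system. Thus the given map $\alpha\colon\chi(C)\to T_1$ already serves as the ``$\alpha$-part'' of a factorization, and the task reduces to producing a compatible ``$\beta$-part'' $\beta_t\colon(C^{\min})^\lor_t\to T^\lor_t$.

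The construction of $\beta$ is where the hypothesis is used, and I would first note that every dual element is in its scope. Recall $(C^{\min})^\lor_t={\rm CP}(C^{\min},P^t)$. Any such $\phi$ is in particular positive as a map $C=(C^{\min})_1\to\operatorname{Psd}_t(\C)$, hence $\phi\in\operatorname{Pos}(C,\operatorname{Psd}_t(\C))$. (In fact these two sets coincide, since $C^{\min}$ is generated from its base cone, so level-$1$ positivity already forces complete positivity; but only this trivial inclusion is needed.) Applying the hypothesis to $\phi$ yields some $\psi\in{\rm CP}(T,P^t)=T^\lor_t$ with $\phi(c)=\psi(\alpha(c))$ for all $c\in\chi(C)$, and I set $\beta_t(\phi):=\psi$.

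It then remains to verify the factorization identity on generators. For $c\in\chi(C)$ (so $s=1$) and $\phi\in(C^{\min})^\lor_t$ one has $[c,\phi]_{C^{\min}}=\phi[c]=\phi(c)$ on the one hand, and $[\alpha(c),\beta_t(\phi)]_T=\beta_t(\phi)[\alpha(c)]=\psi(\alpha(c))$ on the other; these agree precisely by the defining property of $\psi$. By the extension procedure described after \Cref{thm:NC_lift_iff_fact}, namely fixing for each higher-level element a matrix-conic combination of generators and pushing it through $\alpha$ and $\beta_t$, this equality propagates to all of $(C^{\min})_s\times(C^{\min})^\lor_t$. Concretely, since $\phi$ and $\psi$ act only on the $X$- respectively $Y$-tensor factor and leave the $\operatorname{Her}_s(\C)$-factor untouched, the identity on generators is preserved under the compressions and conic sums that build up $(C^{\min})_s$. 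Hence $(\alpha,\beta)$ is a $T$-factorization of the slack operator of $C^{\min}$, and the converse part of \Cref{thm:NC_lift_iff_fact} delivers the desired $T$-lift.

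The step I expect to demand the most care is this last propagation: confirming that an identity imposed only on the level-$1$ generators $\chi(C)$ genuinely upgrades to the full levelwise factorization identity. This rests on $C^{\min}$ being generated from level $1$ by matrix-conic operations together with the fact that the slack pairing is linear in the tensor factor on which the dual maps act. Notably, no positivity or extension difficulty of the kind that forced Arveson's Extension Theorem in \Cref{thm:NC_lift_iff_fact} appears on the $\beta$-side here, because $\beta_t(\phi)$ is handed to us directly by the hypothesis rather than constructed by extension from a subsystem.
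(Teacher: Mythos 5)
Your proof is correct and takes essentially the same route as the paper: the paper's own proof also sets $\beta_t(\phi)\coloneqq\psi$, uses the relation between $(C^{\textnormal{min}})^\lor_t={\rm CP}(C^{\textnormal{min}},P^t)$ and $\operatorname{Pos}(C,\operatorname{Psd}_t(\C))$ (where you rightly observe that only the easy inclusion is needed to define $\beta$ on the whole dual), and feeds the factorization---with $\alpha$ defined only on $\chi(C)$ via the generating-set discussion following \Cref{thm:NC_lift_iff_fact}---into the converse direction of that theorem. You merely spell out the level-wise propagation that the paper leaves implicit in that discussion.
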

\begin{proof}
    This is a direct consequence of \Cref{thm:NC_lift_iff_fact}, using that positivity and complete positivity coincide  on minimal systems, and by setting $\beta_t(\phi)\coloneqq \psi.$
\end{proof}

We can also characterize realizations instead of lifts in terms of factorizations.

\begin{definition}
    A $T$-factorization $(\alpha,\beta)$ of the slack operator of $S$ is called {\it linear}, if $$\alpha_s=\psi\otimes{\rm id}_{{\rm Her}_s(\C)}$$ holds for all $s\geqslant 1,$ where $\psi\in{\rm CP}(S,T)$ is a fixed cp-map. 
\end{definition}

\begin{theorem}\label{thm:factlift2}Let $S,T$ be operator systems. 
If the slack operator of $S$ has a linear $T$-factorization, then $S$ has a $T$-realization.
Conversely, if $S$ has a proper $T$-realization, then the slack operator of $S$ has a linear $T$-factorization.
\end{theorem}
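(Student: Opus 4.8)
The plan is to prove the two implications separately, reusing the two engines already visible in the proof of \Cref{thm:NC_lift_iff_fact}: biduality of proper cones for the first implication, and Arveson's Extension Theorem for the second.

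For the first implication, suppose the slack operator has a linear $T$-factorization, so that $\alpha_s=\psi\otimes{\rm id}_{{\rm Her}_s(\C)}$ for a fixed $\psi\in{\rm CP}(S,T)$, together with maps $\beta_t$. I claim $\psi$ itself is a $T$-realization, i.e.\ $S_s=\psi^{-1}[T_s]$ at every level $s$. The inclusion $S_s\subseteq\psi^{-1}[T_s]$ is immediate from complete positivity of $\psi$. For the reverse inclusion the crucial observation is that, for a fixed $\phi\in S_s^\lor$, both maps $a\mapsto\phi[a]$ and $a\mapsto\beta_s(\phi)[\psi[a]]$ are \emph{linear} in $a$ (the second because $\beta_s(\phi)$ and $\psi$ are linear). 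The factorization identity forces them to agree on $S_s$, and since $S_s$ is proper and hence spans $X\otimes{\rm Her}_s(\C)$, they agree on the whole space. Thus for any $a\in\psi^{-1}[T_s]$ we obtain $\phi[a]=\beta_s(\phi)[\psi[a]]$, which is positive semidefinite because $\psi[a]\in T_s$ and $\beta_s(\phi)\in T_s^\lor$. As this holds for all $\phi\in S_s^\lor$, biduality of the proper cone $S_s$ yields $a\in S_s$, exactly as in the first half of the proof of \Cref{thm:NC_lift_iff_fact}.

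For the converse, assume $S$ has a proper $T$-realization $\psi$. I would first record that $\psi$ is injective: any $x\in\ker\psi$ satisfies $\psi[x]=\psi[-x]=0\in T_1$, so $x,-x\in\psi^{-1}[T_1]=S_1$, whence $x\in S_1\cap -S_1=\{0\}$ by pointedness. Setting $\psi'\coloneqq\psi$ (which is completely positive, since $S=\psi^{-1}[T]$ in particular gives $\psi[S_s]\subseteq T_s$) provides the linear part $\alpha_s=\psi\otimes{\rm id}_{{\rm Her}_s(\C)}$. It remains to build $\beta_t$: given $\phi\in S_t^\lor={\rm CP}(S,P^t)$, injectivity lets me regard $\psi(X)$ as an operator subsystem of $T$ on which $\phi$ (transported along $\psi^{-1}$) is completely positive, and properness of the realization supplies the interior point needed to invoke Arveson's Extension Theorem. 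This produces $\Phi\in{\rm CP}(T,P^t)$ with $\Phi\circ\psi=\phi$; I set $\beta_t(\phi)\coloneqq\Phi$, so that $[\alpha_s(a),\beta_t(\phi)]_T=\Phi[\psi[a]]=\phi[a]=[a,\phi]_S$. Equivalently, one may observe that a realization $\psi$ is the special case $\pi={\rm id}_X,\ \gamma=\psi$ of a $T$-lift, and that the construction in \Cref{thm:NC_lift_iff_fact} then returns $\alpha_s(a)=\psi[a]$, which is linear by inspection.

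As with the main theorem, I expect the extension step in the converse to be the only genuine obstacle: verifying that the subsystem inclusion furnished by the injective proper realization meets the hypotheses of Arveson's Extension Theorem, and that properness is really what is used there. The forward implication is comparatively soft, its one subtlety being the extension of the factorization identity from $S_s$ to the ambient space by linearity, which is precisely what makes biduality applicable to arbitrary $a\in\psi^{-1}[T_s]$.
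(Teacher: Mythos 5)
Your proof is correct and takes essentially the same route as the paper: the forward direction is the paper's biduality argument (you additionally make explicit the step, left implicit in the paper, that linearity of $\alpha_s=\psi\otimes{\rm id}$ and of $\beta_s(\phi)$ extends the factorization identity from the spanning cone $S_s$ to all of $X\otimes{\rm Her}_s(\C)$), and your converse, including the ``equivalently'' remark, is exactly the paper's reduction of a proper realization to a proper $T$-lift with $\pi={\rm id}_X$, $\gamma=\psi$, followed by the Arveson extension step from \Cref{thm:NC_lift_iff_fact}. The extra details you record (automatic injectivity of $\psi$ via pointedness of $S_1$, and where properness enters Arveson's theorem) are correct and only make the argument more self-contained.
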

\begin{proof}
First let $\psi\in{\rm CP}(S,T)$ be such that $\alpha_s=\psi\otimes{\rm id}_{{\rm Her}_s(\mathbb C)},$ together with maps $\beta_t,$ provide a linear $T$-factorization for the slack operator of $S$. We will show that $\psi^{-1}[T]=S$ holds, where ``$\supseteq$" is clear from complete positivity of $\psi$. Now if $a\in X\otimes{\rm Her}_s(\mathbb C)\setminus S_s,$ there is some $\phi\in S^\vee_s$ with $\phi(a)<0.$ This implies that $\phi[a]=\beta_s(\phi)[\psi[a]]$ is not positive semidefinite. From $\beta_s(\phi)\in T^\vee_s$ we see that $\psi[a]\notin T_s,$ and thus $a\notin\psi^{-1}[T]_s$, which proves the first claim. 

The second statement follows immediately from the proof of \Cref{thm:NC_lift_iff_fact}, since $\alpha_s=\gamma\otimes{\rm id}_{{\rm Her}_s(\mathbb C)}$ holds, if no projection map $\pi$ is involved. 
\end{proof}

\begin{corollary}
\label{cor:min}
    An operator system \(S\) is a free spectrahedron of size \(d\) if and only if its slack operator  admits a linear \(P^d\)-factorization. For a proper cone $C$, this is true for $C^\textnormal{min}$  if and only if the map $\alpha$ from \Cref{cor:minlift} can be chosen linear.\qed
\end{corollary}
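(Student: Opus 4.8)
The plan is to derive Corollary~\ref{cor:min} as a specialization of Theorem~\ref{thm:factlift2}, reading the two displayed claims as two separate applications. First I would recall that an operator system $S$ is a free spectrahedron of size $d$ precisely when it admits a proper $P^d$-realization: by the Choi--Effros remark, a free spectrahedron is exactly a system with a proper $P_H$-realization for $H = \mathbb{C}^d$, and $P_{\mathbb{C}^d} = P^d$ by definition. Hence the equivalence ``free spectrahedron of size $d$'' $\Leftrightarrow$ ``proper $P^d$-realization'' is essentially definitional, modulo one subtlety I flag below. With that identification in hand, the first claim becomes: $S$ has a (proper) $P^d$-realization iff its slack operator has a linear $P^d$-factorization — which is exactly Theorem~\ref{thm:factlift2} with $T = P^d$.

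The delicate point, and the step I expect to be the main obstacle, is the word \emph{proper}. Theorem~\ref{thm:factlift2} gives ``linear factorization $\Rightarrow$ realization'' without properness, but only ``\emph{proper} realization $\Rightarrow$ linear factorization'' in the reverse direction. For the corollary to read as a clean ``if and only if'' I must reconcile this asymmetry with the definition of a free spectrahedron. The cleanest route is to argue that whenever $S$ is a proper operator system (its cones $S_s$ have nonempty interior), any $P^d$-realization can be taken proper, or equivalently that a linear $P^d$-factorization already forces $\psi$ to hit the interior of $P^d_1 = \operatorname{Psd}_d(\mathbb{C})$ after passing to a suitable compression; alternatively, I would simply note that ``free spectrahedron of size $d$'' is \emph{defined} via a proper realization, so that the forward direction uses the proper half of Theorem~\ref{thm:factlift2} directly, while the backward direction (linear factorization $\Rightarrow$ realization) produces a realization that is automatically proper because $S_1$ itself has nonempty interior and $\psi^{-1}[P^d] = S$ pulls this back to the image. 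I would spell out this properness bookkeeping rather than hide it, since it is the only nonformal ingredient.

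For the second claim I would specialize to $S = C^\textnormal{min}$ and combine the general equivalence with the structural simplification already observed before Corollary~\ref{cor:minlift}: on a minimal system $C^\textnormal{min}$, a factorization is determined by a single map $\alpha$ on the generating set $\chi(C)$, and positivity coincides with complete positivity. A linear $P^d$-factorization in the sense of the preceding definition means $\alpha_s = \psi \otimes \mathrm{id}$ for a fixed $\psi \in \mathrm{CP}(S,P^d)$; restricting to level $1$, this says exactly that the map $\alpha\colon \chi(C) \to P^d_1$ from Corollary~\ref{cor:minlift} is the restriction of a linear completely positive map, i.e.\ that $\alpha$ ``can be chosen linear.'' Conversely, if $\alpha$ is linear, extending it by $\alpha_s = \alpha \otimes \mathrm{id}$ yields a linear factorization. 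So the second claim is just the first claim transported through the minimal-system dictionary.

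I would therefore structure the proof as: (i) identify ``free spectrahedron of size $d$'' with ``proper $P^d$-realization'' via Choi--Effros; (ii) invoke Theorem~\ref{thm:factlift2} with $T = P^d$ for both directions, handling the properness matching explicitly; (iii) specialize to $C^\textnormal{min}$ using the reduction to a single generating-set map and the coincidence of positivity with complete positivity, to recover the linear-$\alpha$ reformulation. The routine verifications — that $\psi \otimes \mathrm{id}$ restricts correctly to generators and that linearity at level $1$ propagates to all levels on a minimal system — I would leave as immediate, since they follow formally from the tensor structure. The genuine content is entirely in step (ii)'s properness bookkeeping; everything else is translation.
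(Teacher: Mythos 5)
Your skeleton is the same as the paper's: \Cref{cor:min} is stated there with a \(\qedsymbol\) and no separate proof precisely because it is intended as the specialization \(T = P^d\) of \Cref{thm:factlift2}, combined, for the second claim, with the reduction of factorizations of \(C^\textnormal{min}\) to a single map on \(\chi(C)\) and the identity \(\operatorname{Pos}(C,\operatorname{Psd}_t(\C)) = {\rm CP}(C^\textnormal{min}, P^t)\) that underlies \Cref{cor:minlift}. Your steps (i)--(iii) and your minimal-system dictionary are exactly this translation, and that part is fine.

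The problem lies in the properness bookkeeping, which you rightly single out as the only genuine content and then resolve incorrectly. Your claim that the realization produced by the easy direction of \Cref{thm:factlift2} is ``automatically proper because \(S_1\) itself has nonempty interior and \(\psi^{-1}[P^d]=S\) pulls this back to the image'' is false: properness is a condition on the image of \(\psi\) meeting the interior of \(\operatorname{Psd}_d(\C)\), and this can fail even when \(S\) is a proper system. For instance, \(\psi\colon \R \to {\rm Her}_2(\C)\), \(t \mapsto t E_{11}\), realizes a proper operator system on \(\R\) (the base cone of \(\psi^{-1}[P^2]\) is \(\R_{\geqslant 0}\)), yet every matrix in the image of \(\psi\) is singular; the same happens whenever a realization of smaller size is embedded as a corner of \({\rm Her}_d(\C)\). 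What does work is the compression you allude to but do not carry out: pick \(u\) in the interior of \(S_1\), let \(p\) be the support projection of \(\psi(u)\) and \(q = 1-p\). Since \(u \pm \epsilon x \in S_1\) for small \(\epsilon > 0\), positivity of \(\psi(u) \pm \epsilon\psi(x)\) together with \(q\psi(u)q = 0\) forces first \(q\psi(x)q = 0\) and then \(q\psi(x) = \psi(x)q = 0\), so \(\psi(x) = p\psi(x)p\) for all \(x\in X\). Hence \(\psi\) factors through \(p\,{\rm Her}_d(\C)\,p \cong {\rm Her}_{d'}(\C)\), and the compressed map is a \(P^{d'}\)-realization of \(S\) that hits the interior at \(u\); padding it back up, e.g.\ by \(x \mapsto p\psi(x)p \oplus \lambda(x) I_{d-d'}\) with \(0 \neq \lambda \in (S_1)^\vee = {\rm CP}(S,P^1)\) (so \(\lambda(u)>0\)), yields a proper \(P^d\)-realization. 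With this lemma inserted your argument closes; without it, the backward direction of the first equivalence (linear factorization \(\Rightarrow\) free spectrahedron, where the latter is defined via a \emph{proper} realization) does not.
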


\begin{example} \label{ex:simplex_hedron}
Let \(C \subseteq \R^n\) be a simplex cone, so without loss of generality \(C\) is the positive orthant \(\R_{+}^n\). We want to show that \(C^\textnormal{min}\) is a free spectrahedron of size \(n\). By \Cref{cor:min} we can equivalently find a positive linear map \(\alpha\colon \R^n \to \operatorname{Her}_n(\C)\) which fulfils the factorization condition. We claim that defining \(\alpha\) by \(\alpha(e_i) = E_{ii} = e_i e_i^*\) and then linearly extending yields a linear factorization for the slack operator. 

Let \(\phi \in \left(C^\textnormal{min}\right)^\lor_t \cong \operatorname{Pos}(C, \operatorname{Psd}_t(\C))\). Since the \(e_i\) form a basis and \(\phi\) is linear and positive on each of them, \(\phi\) is uniquely determined by a choice of \(P_1, \dots, P_n \in \operatorname{Psd}_t(\C),\) where \(\phi(e_i)=P_i\). We decompose each of these into rank-1 matrices as \[P_i = \sum_{k=1}^t q_{ik} q_{ik}^*\] for \(q_{ik} \in \C^t\) and then set \[V_{(l, k)} = e_l q_{l k}^*\] with the multi-index \((l, k)\) running over \(l = 1, \dots, n\) and \(k = 1, \dots, t\). For \(i = 1, \dots, n\) we get \[\sum_{(l ,k)} V_{(l, k)}^* \alpha(e_i) V_{(l, k)} = \sum_{(l, k)} q_{lk} e_l^* e_i e_i^* e_l q_{lk}^* = \sum_k q_{ik} q_{ik}^* = P_i =\phi(e_i).\] Hence, \(\alpha\) is a linear factorization map for the slack operator, and thus \(C^\textnormal{min}\) is a free spectrahedron. 
\end{example}

\begin{example}
Let \(C \subseteq \R^n\) be a polyhedral cone with \(m > n\) extreme rays. We show that \(C^\textnormal{min}\) is a free spectrahedrop from dimension  \(m\).

We choose generators \(c_1, \dots, c_m \in \R^n,\) of which we assume w.l.o.g.\ that $c_1,\ldots, c_n$ are a basis of $\R^n$.
We define \(\alpha(c_i) = E_{ii}\in{\rm Her}_m(\C)\) for \(i = 1, \dots, m\). This map is obviously not linear, since its image contains \(m > n\) many linearly independent elements.  A positive map \(\phi\in \operatorname{Pos}(C, \operatorname{Psd}_t(\C))\) is again uniquely determined by a tuple of positive semi-definite matrices \(P_1, \dots, P_m\) with \(\phi(c_i) = P_i\) and such that whenever \(c_j = \sum_{i=1}^n \lambda_i c_i\), then \(P_j = \sum_{i=1}^n \lambda_i P_i\). Just like in 
\Cref{ex:simplex_hedron}, we set \(V_{(l, k)} = e_l q_{lk}^*\) where \(P_i = \sum_k q_{ik} q_{ik}^*\) and by direct calculation see that the factorization condition holds. Since \(\alpha\) is not linear, we can only conclude that \(C^\textnormal{min}\) is a free spectrahedrop. 

We also note the following: While it is known that \(C^\textnormal{min}\) is in fact not a free spectrahedron \cite{fnt}, we would need to show that no linear factorization exists in order to conclude the same.
\end{example}

\section{Lifts of Operator Systems and Sums of Squares}

Next, we want to transfer the result from  \Cref{thm:lift_iff_SOS} to the realm of operator systems. The following theorem does this for the case of minimal operator systems. Also here the proof is similar in spirit to the ones in \cite{faw,schei}, but adapted to the matrix-valued setup.

\begin{theorem} \label{thm:NC_lift_iff_SOHS}
	For a proper convex cone $C$, if the minimal operator system \(C^\textnormal{min}\)
	 admits a \(P^d\)-lift, then there exists a space \(W\)  spanned by  real-valued semialgebraic functions on \(\chi(C)\) with \(\dim(W) \leqslant d^2,\) such that \[\forall t\ \forall \phi \in {\rm Pos}(C,{\rm Psd}_t(\C)) \ \exists P_k \in \operatorname{Mat}_{d,t}(W)\colon \phi(c) = \sum_k P_k(c)^* P_k(c)\] holds for all \(c \in \chi(C)\).
	
	Conversely, if a \(d\)-dimensional space \(W\) of arbitrary complex-valued functions on \(\chi(C)\) with the above sum-of-squares property exists, then \(C^\textnormal{min}\) admits a \(P^d\)-lift.
\end{theorem}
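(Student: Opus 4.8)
The plan is to prove the two directions separately, in each case routing through the factorization machinery already established: for the converse (sufficiency of the sum-of-squares condition) I would reduce everything to \Cref{cor:minlift} with $T=P^d$, and for the forward direction I would extract a $P^d$-factorization from the lift via \Cref{thm:NC_lift_iff_fact} and then manufacture the functions by taking a semialgebraic square root of the factorization map. The heart of the argument, and the only genuinely noncommutative point, is obtaining \emph{complete} positivity (rather than mere positivity) of the map reconstructed from the sum-of-squares data; this is where the correct placement of complex conjugates turns out to be essential.

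\emph{Converse direction.} I would fix a basis $h_1,\dots,h_d$ of $W$, set $h(c)=(h_1(c),\dots,h_d(c))^{\mathsf T}\in\C^d$, and define the candidate factorization map by $\alpha(c):=\overline{h(c)}\,\overline{h(c)}^{\,*}\in\operatorname{Psd}_d(\C)=(P^d)_1$, so that $\alpha(c)_{lm}=\overline{h_l(c)}\,h_m(c)$. By \Cref{cor:minlift} it then suffices, for every $t$ and every $\phi\in\operatorname{Pos}(C,\operatorname{Psd}_t(\C))$, to produce a single $\psi\in{\rm CP}(P^d,P^t)$ with $\phi(c)=\psi(\alpha(c))$ on $\chi(C)$. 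Starting from the hypothesised representation $\phi(c)=\sum_k P_k(c)^*P_k(c)$ with $P_k\in\operatorname{Mat}_{d,t}(W)$, I would expand each $P_k$ in the basis as $P_k(c)=\sum_{l} h_l(c)\,A_k^{(l)}$ with constant matrices $A_k^{(l)}\in\operatorname{Mat}_{d,t}(\C)$, which yields
\[\phi(c)=\sum_{l,m}\overline{h_l(c)}\,h_m(c)\sum_k\big(A_k^{(l)}\big)^*A_k^{(m)}=\sum_{l,m}\alpha(c)_{lm}\,B^{(l,m)},\qquad B^{(l,m)}:=\sum_k\big(A_k^{(l)}\big)^*A_k^{(m)}.\]
This dictates the definition $\psi(M):=\sum_{l,m}M_{lm}B^{(l,m)}$, which by construction satisfies $\phi(c)=\psi(\alpha(c))$.

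The crucial step is to check that $\psi$ is completely positive. I would verify the identity $\psi(M)=\sum_k \tilde A_k^{\,*}\,(M\otimes I_d)\,\tilde A_k$, where $\tilde A_k:=\sum_m e_m\otimes A_k^{(m)}\in\operatorname{Mat}_{d^2,t}(\C)$; this follows by pairing each side with $v\in\C^t$ and expanding both as $\sum_{l,m}M_{lm}\langle A_k^{(l)}v,A_k^{(m)}v\rangle$. Since $M\mapsto M\otimes I_d$ (ampliation) and $N\mapsto \tilde A_k^{\,*}N\tilde A_k$ (conjugation) are completely positive, so is each summand, hence so is $\psi$, and \Cref{cor:minlift} then delivers the $P^d$-lift. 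I expect the main obstacle to be exactly this conjugation convention: the naive choice $\alpha(c)=h(c)h(c)^*$ instead produces the map $M\mapsto\sum_k \tilde A_k^{\,*}(M^{\mathsf T}\otimes I_d)\tilde A_k$, which involves a transpose and is only positive, not completely positive. Inserting the conjugate $\overline{h(c)}$ into the definition of $\alpha$ is precisely what converts the offending transposition into an ampliation.

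\emph{Forward direction.} Given a $P^d$-lift of $C^\textnormal{min}$, I would first reduce to a proper lift (if $\gamma$ misses the interior of $\operatorname{Psd}_d$, restrict to the subspace spanned by its image to obtain a proper $P^{d'}$-lift with $d'\le d$) and then apply the forward direction of \Cref{thm:NC_lift_iff_fact}; since $C^\textnormal{min}$ is generated by $\chi(C)$, the resulting factorization is encoded by a single map $\alpha\colon\chi(C)\to\operatorname{Psd}_d(\C)$ together with maps $\beta_t(\phi)\in{\rm CP}(P^d,P^t)$ satisfying $\phi(c)=\beta_t(\phi)(\alpha(c))$. Taking the positive semidefinite square root $L(c):=\alpha(c)^{1/2}\in\operatorname{Her}_d(\C)$, I would let $W$ be the complex span of the real and imaginary parts of the entries of $L$; because $L$ is Hermitian these amount to at most $d^2$ real-valued functions, giving $\dim(W)\le d^2$. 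Finally, Choi-decomposing $\beta_t(\phi)(\cdot)=\sum_k V_k^*(\cdot)V_k$ with $V_k\in\operatorname{Mat}_{d,t}(\C)$ and using $L=L^*$ gives $\phi(c)=\sum_k\big(L(c)V_k\big)^*\big(L(c)V_k\big)$, i.e.\ the sum-of-squares representation with $P_k:=L V_k\in\operatorname{Mat}_{d,t}(W)$, in parallel with the classical \Cref{thm:lift_iff_SOS}. Here the main obstacle is the semialgebraic selection: one must justify that the square root $\alpha(c)^{1/2}$ (equivalently, a factorization $\alpha(c)=L(c)^*L(c)$) can be chosen to depend semialgebraically on $c$ within a space of real dimension $d^2$, and separately carry out the reduction to a proper lift so that \Cref{thm:NC_lift_iff_fact} is applicable.
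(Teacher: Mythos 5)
Your converse direction is correct and is essentially the paper's own argument: your $\alpha(c)=\overline{h(c)}\,\overline{h(c)}^{\,*}$, with entries $\overline{h_l(c)}h_m(c)$, is exactly the paper's $\alpha(c)=f(c)^*f(c)$ (with $f$ the row vector of basis functions), and your $\psi(M)=\sum_{l,m}M_{lm}B^{(l,m)}$ is the same map as the paper's $X\mapsto\sum_{k,r}V_{kr}^*XV_{kr}$; the only cosmetic difference is that you certify complete positivity via the ampliation identity $\psi(M)=\sum_k\tilde A_k^{\,*}(M\otimes I_d)\tilde A_k$, whereas the paper reads off a Kraus decomposition directly. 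Your remark about the placement of conjugates (transpose versus ampliation) is a correct and worthwhile observation about the convention.

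The forward direction, however, has a genuine gap, located precisely where you delegate the construction to \Cref{thm:NC_lift_iff_fact}. The theorem's conclusion requires $W$ to be spanned by \emph{semialgebraic} functions, but the factorization map $\alpha$ produced in the proof of \Cref{thm:NC_lift_iff_fact} is defined by an unconstrained choice of a preimage $\tilde a\in\pi^{-1}[a]\cap\gamma^{-1}[T]$; nothing makes this selection semialgebraic (or even measurable), and treating that theorem as a black box gives you no way to recover this property. Note also that you misplace the difficulty: the square root is the harmless part, since the graph $\{(A,Q):Q\succeq 0,\ Q^2=A\}$ is semialgebraic and hence the positive semidefinite square root is automatically a semialgebraic function of its argument; what genuinely needs arranging is that $\alpha$ itself be semialgebraic. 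The paper achieves this by bypassing \Cref{thm:NC_lift_iff_fact} entirely: it chooses a \emph{semialgebraic section} $\iota\colon C\to S_1$ of $\pi$, where $S_1=\gamma^{-1}[P^d]_1$ --- such a section exists by semialgebraic (definable) choice, because the graph $\{(c,z)\in C\times S_1:\pi(z)=c\}$ is semialgebraic ($S_1$ is a spectrahedral cone and $\pi$ is linear) --- and then sets $\alpha(c)=\gamma(\iota(c))$, takes $Q(\iota(c))$ as the square root, and Choi--Kraus-decomposes $\tilde\phi=\phi\circ\pi$ to obtain the $V_k$, exactly as you do afterwards. So your outline becomes a proof only after replacing the citation of \Cref{thm:NC_lift_iff_fact} by this explicit semialgebraic selection. (The properness issue you flag is real, but the paper glosses over it as well: its Choi--Kraus step implicitly extends $\tilde\phi$ from $\gamma(Z)$ to all of $\operatorname{Her}_d(\C)$ via Arveson, which needs an interior point; your sketched reduction via the span of the image is not quite the right construction --- one should compress to the minimal face of $\operatorname{Psd}_d(\C)$ containing $\gamma(Z)\cap\operatorname{Psd}_d(\C)$.)
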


\begin{proof}
	First assume that \(C^\textnormal{min}\) admits a \(P^d\)-lift, i.e.\ there are  maps $\gamma\colon Z\to {\rm Her}_d(\C), \pi\colon Z\to X$ with $C^{\rm min}=\pi[\gamma^{-1}(P^d)].$  So for  $S\coloneqq \gamma^{-1}[P^d]$ we have  $C^{\min}=\pi[S].$   
 
    We can choose a semialgebraic function $\iota\colon C\to S_1$ with $c=\pi(\iota(c))$ for all $c\in C$.  Further, for $z\in S_1$ the matrix $\gamma(z)$ is positive semidefinite, and we denote its (unique) positive semidefinite  square-root by \(Q(z)\). Note that the function $z\mapsto Q(z)$ is semialgebraic. Thus the entries of the $d\times d$ Hermitian matrix $Q(\iota(c))$ can be described by $d^2$-many real valued semialgebraic functions on $C$, which span a space $W$ of dimension at most $d^2$.

    Now let \(\phi \in {\rm Pos}(C,{\rm Psd}_t(\C))={\rm CP}(C^{\rm min},P^t)\). Then  \(\tilde{\phi} \coloneqq \phi \circ \pi\in{\rm CP}(S,P^t).\)  Hence by the Choi-Krauss Theorem  we can find matrices \(V_k\in{\rm Mat}_{d,t}(\C)\) such that  \ \[\phi(\pi(z)) = \tilde{\phi}(z) = \sum_k V_k^* \gamma(z) V_k\] for all $z\in Z$.
     Defining \[P_k(c) \coloneqq Q(\iota(c)) V_k\] yields the desired sums-of-squares decomposition, since the entries of $P_k$ are linear combinations of the $d^2$-many functions from above, and thus lie in $W.$
	
	For the converse, let $W = {\rm span}_\C\{f_1, \dots, f_d\}$ be a space of functions on $\chi(C)$ with the sums-of-squares property. We will show that the conditions of \Cref{cor:minlift} are fulfilled. 
 
    Define \[f \coloneqq \left(f_1, \dots, f_d\right)\] and set \(\alpha(c) \coloneqq f(c)^*f(c)\). Now let $\phi \in {\rm Pos}(C,{\rm Psd}_t(\C))$. By assumption, we find \(P_k \in \operatorname{Mat}_{d,t}(W)\) such that \(\phi(c) = \sum_k P_k(c)^* P_k(c)\) for all $c\in\chi(C)$. Since each entry \([P_k]_{rs}\) is an element of \(W\), we find a complex vector \(b_{krs}\in\C^d\) with \[[P_k]_{rs} = fb_{krs}.\]   Now for \(c \in \chi(C)\) we have
    \begin{align*}[\phi(c)]_{ij} &= \sum_{k,r} [P_k(c)^*]_{ir}[P_k(c)]_{rj} = \sum_{k,r} b_{kri}^*f(c)^*f(c)b_{krj}\\ &=\sum_{k,r} b_{kri}^*\alpha(c)b_{krj}.\end{align*}
    We now define $\psi\in{\rm CP}(P^d,P^t)$ as the map \[X \mapsto \sum_{k,r} V_{kr}^* X V_{kr},\] where $V_{kr}\in{\rm Mat}_{d,t}(\C)$ has $b_{kr1},\ldots, b_{kr
     t}$ as its columns, and obtain $\phi(c)=\psi(\alpha(c)),$ as desired. So \Cref{cor:minlift} yields a $P^d$-lift of $C^{\rm min}.$
\end{proof}

We first emphasise that, just like in \Cref{thm:lift_iff_SOS}, this theorem does not quite allow for a perfect converse. Given a lift of size \(d\), we are guaranteed to find a (at most) \(d^2\)-dimensional space of semi-algebraic functions with the sums-of-squares property. But the converse is slightly different, since it assumes the existence of a \(d\)-dimensional space of (possibly not semi-algebraic) functions to construct a lift of size \(d\). This difference is due to the constructed factorization map being built from matrices of rank 1 only. 

In the commutative case, there are several known obstructions to the existence of spectrahedral lifts, see for instance \cite{fgpst}. In particular, there is a result in \cite{schei} which uses a globally positive non-sum-of-squares polynomial to conclude that a convex set is not a spectrahedrop. Using this result at level \(s = 1\) also obstructs the existence of a \(P^d\)-lift of an operator system, since a \(P^d\)-lift would in particular imply the existence of level-wise spectrahedral lifts. We generalise this obstruction by requiring only a matrix polynomial which is globally positive and not a sum-of-squares. The following example, found in \cite{hn}, shows that this can happen even if the degree and number of variables are such that all globally positive scalar polynomials would be sums-of-squares.

\begin{example}
    The matrix polynomial
    \[H(x,y,z) =
    \begin{pmatrix}
    	2z^2+x^2&-xy&-xz \\
    	-xy&2x^2+y^2&-yz \\
    	-xz&-yz&z^2+2y^2
    \end{pmatrix}
    \] 
    is globally positive semidefinite but not a sum-of-squares of matrix polynomials. To see this, let \(v = (a, b, c)^T\) be a vector of three variables. Then \(v^T H v\) is a well-known example of a globally positive scalar polynomial which is not a sum of squares of polynomials \cite{choi}. Hence, the matrix polynomial itself can also not be a sum of squares of matrix polynomials.
\end{example}

Motivated by such matrix polynomials, the following corollary is an adaptation of the obstruction in \cite{schei}. The proof is inspired by the approach in \cite{faw}.

\begin{corollary} \label{cor:obstruction}
	Let \(q_1, \dots, q_m \in \R[x]\) be polynomials in \(x = (x_1, \dots, x_n)\). Define \[q\colon \R^n \to \R^m; a \mapsto (q_1(a), \dots, q_m(a)),\] $C=\overline{\rm cc}(q(S))$, and  $V_t \coloneqq \operatorname{span}_{\operatorname{Her}_t(\C)}(q_1, \dots, q_m)\subseteq{\rm Her}_t(\R[x])$ for $t\geqslant 1.$
	Assume that for some \(t \in \N\) there exists a homogeneous globally positive semidefinite hermitian matrix polynomial \(H \in V_t\) which is not a sum-of- squares of matrix polynomials, and such that for all \(u \in \R^n\), the translate \(H_u(x) \coloneqq H(x - u)\) is also contained in \(V_t\).
	Then \(C^\textnormal{min}\) has no $P^d$-lift.
\end{corollary}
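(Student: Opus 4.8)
The plan is to argue by contradiction: I would convert a hypothetical $P^d$-lift into a sum-of-hermitian-squares certificate for $H$ itself, contradicting the assumption that $H$ is not such a sum. The role of the translates and of homogeneity is to manufacture an infinite family of positive maps all certified by one fixed finite-dimensional space, a finiteness that cannot coexist with $H$ being non-SOHS.

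First I would set up the dictionary between positive maps and matrix polynomials. Since $C=\overline{\rm cc}(q(S))$, a linear map $\phi$ lies in $\operatorname{Pos}(C,\operatorname{Psd}_t(\C))=(C^\textnormal{min})^\lor_t$ precisely when the associated matrix polynomial $H_\phi=\sum_i\phi(e_i)\,q_i\in V_t$ is positive semidefinite on $S$. As $H$ is globally positive semidefinite, so is every translate $H_u$, and by hypothesis each $H_u$ lies in $V_t$; hence every $H_u$ corresponds to a map $\phi_u\in\operatorname{Pos}(C,\operatorname{Psd}_t(\C))$. I would then assume $C^\textnormal{min}$ admits a $P^d$-lift. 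By \Cref{thm:NC_lift_iff_SOHS} there is a \emph{single} space $W$ of semialgebraic functions on $\chi(C)$ with $\dim W\le d^2$ supplying the sum-of-squares certificates for all of $\operatorname{Pos}(C,\operatorname{Psd}_t(\C))$ at once. Pulling these certificates back along $q$ and specialising to the maps $\phi_u$ gives, for one fixed finite-dimensional space $\tilde W=\{w\circ q: w\in W\}$ of semialgebraic functions on $S$ (with basis $g_1,\dots,g_N$, $N\le d^2$), representations $H(a-u)=\sum_k P_k^{(u)}(a)^*P_k^{(u)}(a)$ valid for all $a\in S$, with the entries of each $P_k^{(u)}$ in $\tilde W$, for every $u$. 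Rewriting each representation through its Gram matrix exhibits every translate $H(\cdot-u)$ as a point of the single convex cone $\mathcal K$ of matrix functions that are sums of hermitian squares from $\tilde W$; this cone is a linear image of $\operatorname{Psd}_{Nt}(\C)$ sitting inside the finite-dimensional span of the products $\overline{g_i}\,g_j$.

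The heart of the argument is to exploit that the whole family $\{H_u\}_{u\in\R^n}$ lives in this one finite-dimensional cone while sharing the common leading form $H$. Following the translation-to-infinity idea of \cite{schei,faw}, I would substitute $u=\lambda w$, rescale $a\mapsto\lambda a$, divide by $\lambda^{2\delta}$ where $2\delta=\deg H$, and pass to the limit $\lambda\to\infty$; homogeneity of $H$ keeps the left-hand side anchored at $H$, while the boundedness enforced by a fixed-dimensional Gram cone lets one extract a limiting sum-of-hermitian-squares representation of $H$ by homogeneous pieces of degree $\delta$. Since the certificates hold on $S$, I would work on the Zariski closure of $S$ (the case $S=\R^n$ being typical) so that these limiting identities are genuine polynomial identities.

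The main obstacle is the last mile of this step: \Cref{thm:NC_lift_iff_SOHS} only provides \emph{semialgebraic} certificate functions, so I must upgrade the limiting certificate from semialgebraic to genuinely polynomial. This is where the translation parameter earns its keep — the certificates for $H_u$ are forced to reproduce a matrix polynomial of fixed degree $2\delta$, which pins their effective degree to $\delta$ and, together with the closedness of the cone of degree-$\le\delta$ matrix sums of hermitian squares, should force the limit to be a polynomial sum of hermitian squares. Once $H$ is realised as a sum of hermitian squares of matrix polynomials, this contradicts the hypothesis, and the contradiction holds for every $d$, so no $P^d$-lift of $C^\textnormal{min}$ exists.
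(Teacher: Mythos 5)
Your setup (the dictionary between $\operatorname{Pos}(C,\operatorname{Psd}_t(\C))$ and positive semidefinite matrix polynomials in $V_t$, and the appeal to \Cref{thm:NC_lift_iff_SOHS} to get one fixed space $\tilde W$ of semialgebraic certificate functions of dimension at most $d^2$) matches the paper. But the analytic core of your argument has a genuine gap, and it is exactly at the point you flag as the ``last mile.'' The translation-to-infinity limit you propose can, at best, produce a representation of $H$ as a sum of hermitian squares of \emph{homogeneous semialgebraic} matrix functions of degree $N=\tfrac12\deg H$ --- and that statement is vacuous: every globally positive semidefinite homogeneous $H$ trivially admits such a representation, namely $H = (H^{1/2})^*H^{1/2}$ with $H^{1/2}$ the pointwise positive semidefinite square root, which is homogeneous and semialgebraic but not polynomial. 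So no contradiction can ever be extracted at that level. Your proposed fix (``degree pinning'' plus closedness of the cone of polynomial sums of hermitian squares of bounded degree) does not repair this: closedness of a cone of \emph{polynomials} is irrelevant when the limiting objects need not be polynomials, and rescaled limits $g(\lambda a)/\lambda^{N}$ of semialgebraic functions are in general only semialgebraic (e.g.\ $g(a)=\norm{a}$ survives rescaling unchanged). Membership in your finite-dimensional Gram cone, built on a basis of semialgebraic functions, likewise says nothing about polynomiality.

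What is missing is the ingredient that converts semialgebraic data into polynomial data, and the paper gets it from \emph{smoothness at a point}, not from behaviour at infinity. This also reverses the role of the two hypotheses relative to your plan: in the paper, the translates $H_u\in V_t$ are used to re-center the problem at a point $u$ where all the finitely many certificate functions in $X=\tilde W$ are smooth (semialgebraic functions are smooth almost everywhere, and the space is finite-dimensional, so a common smooth point exists); homogeneity of $H$ is then used in a \emph{local} Taylor-expansion argument. Writing $H(x)=H_u(x+u)=\sum_k P_k(x+u)^*P_k(x+u)$ with each $P_k(\cdot+u)$ smooth near $0$, one Taylor-expands and compares lowest-degree homogeneous terms: sums of hermitian squares of the lowest-order Taylor coefficients cannot cancel, so all terms below degree $2N$ vanish and the degree-$2N$ part exhibits $H$ as a sum of hermitian squares of genuine matrix \emph{polynomials} (the degree-$N$ Taylor coefficients), the desired contradiction. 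Note that this is precisely where the triviality above is defeated: $H^{1/2}$ fails to be smooth at the zeros of $H$ (and a homogeneous $H$ always vanishes at the origin), so the smooth-point/Taylor mechanism genuinely excludes such non-polynomial certificates, while a limit at infinity does not. To repair your proof you would have to replace the rescaling limit by this local expansion, i.e.\ essentially adopt the argument of the paper (following \cite{faw}).
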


\begin{proof} 
	Assume for contradiction that \(C^\textnormal{min}\) has a $P^d$-lift. By \Cref{thm:NC_lift_iff_SOHS} we can find a $d^2$-dimensional space $W$ of real-valued semialgebraic functions on $C$ with the sums-of-squares-property.  We set \(X \coloneqq W\circ q\), the space of functions from $W$ combined with $q,$ which is a space of semialgebraic functions on $\R^n.$ Since semialgebraic functions are smooth almost everywhere, and the space is finite-dimensional, we find \(u \in \R^n\) such that all occuring functions are smooth on a small neighbourhood around \(u\). Since we can shift everything to $0$ and back, we can assume  \(u=0\) without loss of generality.  	  
  
	Every globally positive semidefinite matrix polynomial \(P \in V_t\) is  of the form \(P = \phi \circ q\) for some \(\phi \in \operatorname{Pos}(C, \operatorname{Psd}_t(\C))\). We thus obtain  \[P(a) = \sum_k P_k(a)^* P_k(a)\] for all $a\in\R^n$, where $P_k\in{\rm Mat}_{d,t}(X).$
 
	We now proceed analogously to \cite{faw}: By assumption we have a homogeneous \(H \in V_t\) which is globally positive, so \(\deg(H) = 2N\), but not a sum-of-squares of matrix polynomials. By the above, we write it as a sum-of-squares of matrices with entries from \(X\). As argued before, they are all smooth around 0, so we perform a Taylor expansion of the matrix-valued functions. Comparing the lowest-degree homogeneous terms on both sides shows that \(H(x)\) is a sum-of-squares of matrix polynomials, a contradiction to our assumptions. Hence, \(C^{\rm min}\) does not have a \(P^d\)-lift.
\end{proof}

\begin{remark}
    We note that in \cite{faw}, the assumption of \(H\) being homogeneous can be dropped at the expense of a somewhat more technical proof, though the precise assumptions are different. A similar improvement of \Cref{cor:obstruction} might hold, possibly also requiring an adjustment of the assumptions.
\end{remark}

\newpage\thispagestyle{empty}
\addcontentsline{toc}{part}{Bibliography} 
{\linespread{1}\bibliographystyle{dpbib} \bibliography{references}}
\end{document}